\font\smallit=cmti10
\renewcommand\section{\@startsection {section}{1}{\z@}
{-30pt \@plus -1ex \@minus -.2ex}
{2.3ex \@plus.2ex}
{\normalfont\normalsize\bfseries}}
\renewcommand\subsection{\@startsection{subsection}{2}{\z@}
{-3.25ex\@plus -1ex \@minus -.2ex}
{1.5ex \@plus .2ex}
{\normalfont\normalsize\bfseries}}
\renewcommand{\@seccntformat}[1]{\csname the#1\endcsname. }
\newtheorem{theorem}{Theorem}
\newtheorem{lemma}[theorem]{Lemma}
\newtheorem{proposition}[theorem]{Proposition}
\newtheorem{remark}[theorem]{Remark}
\newtheorem{example}[theorem]{Example}
\DeclareMathOperator{\SL2Z}{\text{\rm SL}(2,\mathbb{Z})}
\begin{document}

\begin{center}
\uppercase{\bf A Note on Boolean Lattices and Farey Sequences III}
\vskip 20pt
{\bf Andrey\,O.\,Matveev
}
\\
{\smallit $\quad$}\\
{\tt andrey.o.matveev@gmail.com}
\end{center}
\vskip 30pt
$\quad$
\vskip 30pt

\centerline{\bf Abstract}
\noindent
We describe monotone maps between subsequences of the~Farey sequences.

\pagestyle{myheadings}
\thispagestyle{empty}
\baselineskip=12.875pt
\vskip 30pt

\section{Introduction}

Let $\mathbb{V}(n)$ be an $n$--dimensional
linear space,
and $\mathbf{A}$ its proper $m$--dimensional
subspace. We associate with the integers $n$ and $m$ the increasing sequence of irreducible fractions
\begin{multline}
\label{eq:63}
\mathcal{F}\bigl(\mathbb{B}(n),m\bigr):=\Bigl(\tfrac{\dim(\mathbf{B}\cap\mathbf{A})}{\gcd(\dim(\mathbf{B}\cap\mathbf{A}),
\dim\mathbf{B})}\!\Bigm/\!\tfrac{\dim\mathbf{B}}{\gcd(\dim(\mathbf{B}\cap\mathbf{A}),
\dim\mathbf{B})}:\\
\mathbf{B}\text{\
subspace of\ } \mathbb{V}(n),\ \dim\mathbf{B}>0\Bigr)\; ;
\end{multline}
in other words,
\begin{equation}
\label{eq:64}
\mathcal{F}\bigl(\mathbb{B}(n),m\bigr):=\left(\tfrac{h}{k}\in\mathcal{F}_n:\ m+k-n\leq h\leq m\right)\; ,
\end{equation}
where $\mathcal{F}_n$ denotes the Farey sequence of order $n$. See e.g.~\cite{Aigner,Anglin,Burton,Hata,Hatcher,Hua,Huxley-A,Huxley-D,Karpenkov,Khrushchev,Oswald-Steuding,Rademacher,Schroeder} on the Farey sequences. In particular,
\begin{equation}
\label{eq:68}
\mathcal{F}\bigl(\mathbb{B}(2m),m\bigr):=\left(\tfrac{h}{k}\in\mathcal{F}_{2m}:\ k-m\leq h\leq m\right)\; .
\end{equation}
Equivalent descriptions of Farey subsequence~(\ref{eq:64})
can be given via the cardinalities of subsets of an $n$--set, or via the ranks of elements of the Boolean lattice $\mathbb{B}(n)$ of rank~$n$, see~\cite{M-Integers-I,M-Integers-II}.

Sequence~(\ref{eq:64}) can be regarded as the intersection
\begin{equation}
\label{eq:66}
\mathcal{F}_n^m\cap\mathcal{G}_n^m=:\mathcal{F}\bigl(\mathbb{B}(n),m\bigr)
\end{equation}
of the Farey subsequence
\begin{align}
\label{eq:2}
\mathcal{F}_n^m:&=\left(\tfrac{h}{k}\in\mathcal{F}_n:\ h\leq m\right)\\
\intertext{introduced in~\cite{A-Z}, and of the Farey subsequence}
\label{eq:3}
\mathcal{G}_n^m:&=\left(\tfrac{h}{k}\in\mathcal{F}_n:\ m+k-n\leq h\right)\; ;
\end{align}
it follows from definitions~(\ref{eq:2}) and~(\ref{eq:3}) that
\begin{equation}
\label{eq:65}
\mathcal{F}_n^m\cup\mathcal{G}_n^m=\mathcal{F}_n\; .
\end{equation}

Recall\hfill that\hfill the\hfill maps\hfill $\mathcal{F}_n\to\mathcal{F}_n$,\hfill $\mathcal{F}_n^m\to\mathcal{G}_n^{n-m}$,\hfill $\mathcal{G}_n^m\to\mathcal{F}_n^{n-m}$,\hfill
and\hfill $\mathcal{F}(\mathbb{B}(n),m)$\newline \mbox{$\to\mathcal{F}(\mathbb{B}(n),n-m)$}, such that
\begin{equation}
\label{eq:1}
\tfrac{h}{k}\mapsto\tfrac{k-h}{k}\; ,\ \ \ \phantom{\mapsto} \left[\begin{smallmatrix}\!h\!\\
\!k\!\end{smallmatrix}\right]\mapsto\left[\begin{smallmatrix}-1&1\\0&1\end{smallmatrix}\right]\cdot
\left[\begin{smallmatrix}\!h\!\\ \!k\!\end{smallmatrix}\right]\; ,
\end{equation}
where $\left[\begin{smallmatrix}\!h\!\\ \!k\!\end{smallmatrix}\right]$ is a vector presentation of $\tfrac{h}{k}$, are all order--reversing and bijective.

In\hfill analogy\hfill to\hfill sequences\hfill (\ref{eq:2})\hfill and\hfill (\ref{eq:3}),\hfill we\hfill define\hfill similar\hfill subsequences\hfill of\hfill the\hfill se-\newline quence~$\mathcal{F}\bigl(\mathbb{B}(n),m\bigr)$ as follows: given an integer $\ell$, $1\leq\ell\leq m$,
\begin{align}
\label{eq:69}
\mathcal{F}\bigl(\mathbb{B}(n),m\bigr)^{\ell}:&=
\left(\tfrac{h}{k}\in\mathcal{F}\bigl(\mathbb{B}(n),m\bigr):\ h\leq \ell\right)\; ;
\\
\intertext{given an integer $\ell$, $m\leq\ell\leq n-1$,}
\label{eq:70}
\mathcal{G}\bigl(\mathbb{B}(n),m\bigr)^{\ell}:&=
\left(\tfrac{h}{k}\in\mathcal{F}\bigl(\mathbb{B}(n),m\bigr):\ \ell+k-n\leq h\right)\; ;
\end{align}
thus, $\mathcal{F}\bigl(\mathbb{B}(n),m\bigr)^m=\mathcal{G}\bigl(\mathbb{B}(n),m\bigr)^m=\mathcal{F}\bigl(\mathbb{B}(n),m\bigr)$.
It is easy to see, as noted in~Section~\ref{sec:1}, that the sequences $\mathcal{F}\bigl(\mathbb{B}(n),m\bigr)^{\ell}$ and
$\mathcal{G}\bigl(\mathbb{B}(n),m\bigr)^{\ell}$ are both sequences of the form
$\mathcal{F}\bigl(\mathbb{B}(\boldsymbol{\cdot}),\boldsymbol{\cdot}\bigr)$ for any allowed~$\ell$\;\!'s; moreover, if $\lambda$ is an integer such that~$1\leq\lambda\leq n-m$, then
\begin{equation}
\mathcal{F}\bigl(\mathbb{B}(n),m\bigr)^{\ell}\cap\mathcal{G}\bigl(\mathbb{B}(n),m\bigr)^{n-\lambda}=
\mathcal{F}\bigl(\mathbb{B}(\ell+\lambda),\ell\bigr)\; .
\end{equation}
In~Section~\ref{section2s}, we describe monotone  maps between (sub)sequences of the~Farey sequences of orders $2^s m$, $2^{s+1}m$ and $2^{s+2}m$. In~Section~\ref{comp-maps}, these observations are reformulated for the~Farey sequences of arbitrary orders. Supplementary information is collected in~Appendix~\ref{sec-app}.

\section{The Farey Subsequences $\mathcal{F}\bigl(\mathbb{B}(n),m\bigr)^{\ell}$ and $\mathcal{G}\bigl(\mathbb{B}(n),m\bigr)^{\ell}$}

\label{sec:1}

It\hfill is\hfill easily\hfill verified\hfill that\hfill the\hfill subsequences\hfill $\mathcal{F}\bigl(\mathbb{B}(n),m\bigr)^{\ell}$\hfill and\hfill $\mathcal{G}\bigl(\mathbb{B}(n),m\bigr)^{\ell}$\hfill of\hfill the\hfill se-\newline quence~$\mathcal{F}\bigl(\mathbb{B}(n),m\bigr)$, defined by Eqs.~(\ref{eq:69}) and~(\ref{eq:70}) respectively, are also sequences of the form~(\ref{eq:64}); indeed, they can be redefined as follows:
\begin{align}
\mathcal{F}\bigl(\mathbb{B}(n),m\bigr)^{\ell}:&=\mathcal{F}_{n-m+\ell}^{\ell}\cap\mathcal{G}_{n-m+\ell}^{\ell}\; , &
\mathcal{G}\bigl(\mathbb{B}(n),m\bigr)^{\ell}:&=\mathcal{F}_{n+m-\ell}^m\cap\mathcal{G}_{n+m-\ell}^m\; ;\\
\intertext{in particular,}
\mathcal{F}\bigl(\mathbb{B}(2m),m\bigr)^{\ell}:&=\mathcal{F}_{m+\ell}^{\ell}\cap\mathcal{G}_{m+\ell}^{\ell}\; , &
\mathcal{G}\bigl(\mathbb{B}(2m),m\bigr)^{\ell}:&=\mathcal{F}_{3m-\ell}^m\cap\mathcal{G}_{3m-\ell}^m\; .
\end{align}

\begin{remark} {\rm(i)} If $\ell\in\mathbb{P}$, $1\leq\ell\leq m$, then
\label{th:1}
\begin{align}
\label{eq:71}
\mathcal{F}\bigl(\mathbb{B}(n),m\bigr)^{\ell}&=\mathcal{F}\bigl(\mathbb{B}(n-m+\ell),\ell\bigr)\; , &
\mathcal{F}\bigl(\mathbb{B}(2m),m\bigr)^{\ell}&=\mathcal{F}\bigl(\mathbb{B}(m+\ell),\ell\bigr)\; .\\
\intertext{\noindent{\rm(ii)} If $\ell\in\mathbb{P}$, $m\leq\ell\leq n-1$, then}
\label{eq:72}
\mathcal{G}\bigl(\mathbb{B}(n),m\bigr)^{\ell}&=\mathcal{F}\bigl(\mathbb{B}(n+m-\ell),m\bigr)\; , &
\mathcal{G}\bigl(\mathbb{B}(2m),m\bigr)^{\ell}&=\mathcal{F}\bigl(\mathbb{B}(3m-\ell),m\bigr)\; .
\end{align}
\end{remark}

\begin{proposition} The maps
\begin{align}
\mathcal{F}\bigl(\mathbb{B}(n),m\bigr)^{\ell}&\to\mathcal{F}\bigl(\mathbb{B}(n),
n-\ell\bigr)^{n-m}\; ,\\
\mathcal{F}\bigl(\mathbb{B}(2m),m\bigr)^{\ell}&\to\mathcal{F}\bigl(\mathbb{B}(2m),
2m-\ell\bigr)^m\; ,\\
\mathcal{G}\bigl(\mathbb{B}(n),m\bigr)^{\ell}&\to\mathcal{G}\bigl(\mathbb{B}(n),
n-\ell\bigr)^{n-m}\; ,\\
\mathcal{G}\bigl(\mathbb{B}(2m),m\bigr)^{\ell}&\to\mathcal{G}\bigl(\mathbb{B}(2m),
2m-\ell\bigr)^m\; ,
\end{align}
defined by Eq.~{\rm(\ref{eq:1})}, are order--reversing and bijective.
\end{proposition}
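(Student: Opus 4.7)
The plan is to reduce all four assertions to the already-stated fact (recorded just before Remark~\ref{th:1}) that the map $\frac{h}{k}\mapsto\frac{k-h}{k}$ is an order-reversing bijection $\mathcal{F}(\mathbb{B}(N),M)\to\mathcal{F}(\mathbb{B}(N),N-M)$ for any admissible $N,M$. Once both the domain and codomain of each of the four displayed maps are rewritten as sequences of the base form $\mathcal{F}(\mathbb{B}(\boldsymbol\cdot),\boldsymbol\cdot)$ using Remark~\ref{th:1}, the statements become instances of this base case, and no further analytic work is needed.

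\textbf{Map 1.} By part~(i) of Remark~\ref{th:1}, the domain satisfies $\mathcal{F}(\mathbb{B}(n),m)^{\ell}=\mathcal{F}(\mathbb{B}(n-m+\ell),\ell)$. For the codomain, the condition $1\le n-m\le n-\ell$ is equivalent to $\ell\le m$, which holds by hypothesis, so part~(i) again yields $\mathcal{F}(\mathbb{B}(n),n-\ell)^{n-m}=\mathcal{F}(\mathbb{B}(n-m+\ell),n-m)$. Since $(n-m+\ell)-\ell=n-m$, Map~1 is exactly the base-case bijection on $\mathcal{F}(\mathbb{B}(n-m+\ell),\ell)$, and the assertion follows. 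Specialising to $n=2m$ disposes of Map~2.

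\textbf{Map 3.} By part~(ii) of Remark~\ref{th:1}, the domain satisfies $\mathcal{G}(\mathbb{B}(n),m)^{\ell}=\mathcal{F}(\mathbb{B}(n+m-\ell),m)$. For the codomain, the condition $n-\ell\le n-m\le n-1$ is equivalent to $m\le\ell$ and $m\ge 1$, both of which hold, so part~(ii) again yields $\mathcal{G}(\mathbb{B}(n),n-\ell)^{n-m}=\mathcal{F}(\mathbb{B}(n+m-\ell),n-\ell)$. Since $(n+m-\ell)-m=n-\ell$, Map~3 is exactly the base-case bijection on $\mathcal{F}(\mathbb{B}(n+m-\ell),m)$. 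Setting $n=2m$ gives Map~4.

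The only genuine obstacle is bookkeeping: one must check that the parameter constraints needed to invoke Remark~\ref{th:1} on both the source and the target are implied by the standing hypotheses $1\le\ell\le m$ (for the $\mathcal{F}$-cases) and $m\le\ell\le n-1$ (for the $\mathcal{G}$-cases), and that after each rewriting the ambient parameter $N$ coincides on the two sides so that the base-case involution $\frac{h}{k}\mapsto\frac{k-h}{k}$ applies verbatim; the arithmetic above shows that this is indeed so.
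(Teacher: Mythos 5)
Your proposal is correct and follows essentially the same route as the paper: rewrite both the domain and the codomain of each map as sequences of the form $\mathcal{F}\bigl(\mathbb{B}(\boldsymbol{\cdot}),\boldsymbol{\cdot}\bigr)$ via Remark~\ref{th:1}(i) and (ii) (obtaining $\mathcal{F}\bigl(\mathbb{B}(n-m+\ell),\ell\bigr)\to\mathcal{F}\bigl(\mathbb{B}(n-m+\ell),n-m\bigr)$ and $\mathcal{F}\bigl(\mathbb{B}(n+m-\ell),m\bigr)\to\mathcal{F}\bigl(\mathbb{B}(n+m-\ell),n-\ell\bigr)$), then invoke the order--reversing bijection of Eq.~(\ref{eq:1}), with the $n=2m$ cases as specializations. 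Your extra bookkeeping on the parameter constraints is fine (the condition $m\leq n-1$ you implicitly need is guaranteed by $\mathbf{A}$ being a proper subspace) and goes slightly beyond what the paper spells out.
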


\begin{proof}
Note that $\mathcal{F}\bigl(\mathbb{B}(n),n-\ell\bigr)^{n-m}=\mathcal{F}\bigl(\mathbb{B}(n-m+\ell),n-m\bigr)$, by Remark~\ref{th:1}(i), and apply the
order--reversing bijection
from Eq.~(\ref{eq:1}) to the sequences~\mbox{$\mathcal{F}\bigl(\mathbb{B}(n-m+\ell),\ell\bigr)$} $=\mathcal{F}\bigl(\mathbb{B}(n),m\bigr)^{\ell}$ and
$\mathcal{F}\bigl(\mathbb{B}(n-m+\ell),n-m\bigr)$.

We have $\mathcal{G}\bigl(\mathbb{B}(n),n-\ell\bigr)^{n-m}=\mathcal{F}\bigl(\mathbb{B}(n+m-\ell),n-\ell\bigr)$, by Remark~\ref{th:1}(ii), and Eq.~(\ref{eq:1}) provides the order--reversing bijection between the sequences~\mbox{$\mathcal{F}\bigl(\mathbb{B}(n+m-\ell),m\bigr)$} $=\mathcal{G}\bigl(\mathbb{B}(n),m\bigr)^{\ell}$ and~$\mathcal{F}\bigl(\mathbb{B}(n+m-\ell),n-\ell\bigr)$.
\end{proof}

\begin{example}

Suppose $n:=6$, $m:=4$, $\ell':=3$ and $\ell'':=5$.

{
\footnotesize
\begin{align*}
\mathcal{F}_n &= \bigl(\tfrac{0}{1}<\tfrac{1}{6}<\tfrac{1}{5}<\tfrac{1}{4}<\tfrac{1}{3}<\tfrac{2}{5}<
\tfrac{1}{2}<\tfrac{3}{5}<\tfrac{2}{3}<\tfrac{3}{4}<\tfrac{4}{5}<\tfrac{5}{6}<\tfrac{1}{1}\bigr)\; ,\\
\mathcal{F}^m_n &= \bigl(\tfrac{0}{1}<\tfrac{1}{6}<\tfrac{1}{5}<\tfrac{1}{4}<\tfrac{1}{3}<\tfrac{2}{5}<
\tfrac{1}{2}<\tfrac{3}{5}<\tfrac{2}{3}<\tfrac{3}{4}<\tfrac{4}{5}<\phantom{\tfrac{5}{6}}<\tfrac{1}{1}\bigr)\; ,\\
\mathcal{G}^m_n &= \bigl(\tfrac{0}{1}<\phantom{\tfrac{1}{6}}<\phantom{\tfrac{1}{5}}<\phantom{\tfrac{1}{4}}
<\tfrac{1}{3}<\phantom{\tfrac{2}{5}}<
\tfrac{1}{2}<\tfrac{3}{5}<\tfrac{2}{3}<\tfrac{3}{4}<\tfrac{4}{5}<\tfrac{5}{6}<\tfrac{1}{1}\bigr)\; ,\\
\mathcal{F}\bigl(\mathbb{B}(n),m\bigr) &= \bigl(\tfrac{0}{1}<\phantom{\tfrac{1}{6}}<\phantom{\tfrac{1}{5}}<\phantom{\tfrac{1}{4}}
<\tfrac{1}{3}<\phantom{\tfrac{2}{5}}<
\tfrac{1}{2}<\tfrac{3}{5}<\tfrac{2}{3}<\tfrac{3}{4}<\tfrac{4}{5}<\phantom{\tfrac{5}{6}}<\tfrac{1}{1}\bigr)\; ,\\
\mathcal{F}\bigl(\mathbb{B}(n),m\bigr)^{\ell'}\\=\mathcal{F}\bigl(\mathbb{B}(n-m+\ell'),\ell'\bigr)&= \bigl(\tfrac{0}{1}<\phantom{\tfrac{1}{6}}<\phantom{\tfrac{1}{5}}<\phantom{\tfrac{1}{4}}
<\tfrac{1}{3}<\phantom{\tfrac{2}{5}}<
\tfrac{1}{2}<\tfrac{3}{5}<\tfrac{2}{3}<\tfrac{3}{4}<\phantom{\tfrac{4}{5}}<\phantom{\tfrac{5}{6}}<\tfrac{1}{1}\bigr)\; ,\\
\mathcal{F}\bigl(\mathbb{B}(n),n-\ell'\bigr)^{n-m}\\=\mathcal{F}\bigl(\mathbb{B}(n-m+\ell'),n-m\bigr)&= \bigl(\tfrac{0}{1}<\phantom{\tfrac{1}{6}}<\phantom{\tfrac{1}{5}}<\tfrac{1}{4}<\tfrac{1}{3}<\tfrac{2}{5}<
\tfrac{1}{2}<\phantom{\tfrac{3}{5}}<\tfrac{2}{3}<\phantom{\tfrac{3}{4}}<\phantom{\tfrac{4}{5}}<\phantom{\tfrac{5}{6}}<\tfrac{1}{1}\bigr)\; ,\\
\mathcal{G}\bigl(\mathbb{B}(n),m\bigr)^{\ell''}\\=\mathcal{F}\bigl(\mathbb{B}(n+m-\ell''),m\bigr)&= \bigl(\tfrac{0}{1}<\phantom{\tfrac{1}{6}}<\phantom{\tfrac{1}{5}}<\phantom{\tfrac{1}{4}}<\phantom{\tfrac{1}{3}}<\phantom{\tfrac{2}{5}}<
\tfrac{1}{2}<\phantom{\tfrac{3}{5}}<\tfrac{2}{3}<\tfrac{3}{4}<\tfrac{4}{5}<\phantom{\tfrac{5}{6}}<\tfrac{1}{1}\bigr)\; ,\\
\mathcal{G}\bigl(\mathbb{B}(n),n-\ell''\bigr)^{n-m}\\=\mathcal{F}\bigl(\mathbb{B}(n+m-\ell''),n-\ell''\bigr)&= \bigl(\tfrac{0}{1}<\phantom{\tfrac{1}{6}}<\tfrac{1}{5}<\tfrac{1}{4}<\tfrac{1}{3}<\phantom{\tfrac{2}{5}}<
\tfrac{1}{2}<\phantom{\tfrac{3}{5}}<\phantom{\tfrac{2}{3}}<\phantom{\tfrac{3}{4}}<\phantom{\tfrac{4}{5}}<\phantom{\tfrac{5}{6}}<\tfrac{1}{1}\bigr)\; .
\end{align*}
}
\end{example}

\section{The Farey (Sub)sequences $\mathcal{F}_{2^s m}$, $\mathcal{F}\bigl(\mathbb{B}(2^{s+1}m),2^s m\bigr)$,
$\mathcal{F}\bigl(\mathbb{B}(2^{s+2}m),2^{s+1}m\bigr)$, and~Mo\-no\-tone Maps}

\label{section2s}

In this section, we consider
monotone maps between (sub)sequences of the sequences~$\mathcal{F}_{2^s m}$, $\mathcal{F}\bigl(\mathbb{B}(2^{s+1}m),2^s m\bigr)$
and $\mathcal{F}\bigl(\mathbb{B}(2^{s+2}m),2^{s+1}m\bigr)$, where $m\in\mathbb{P}$, $s\in\mathbb{N}$. The observations made for these maps can be used as an instructive induction step in an analysis of composite maps between Farey (sub)sequences that are described in Section~\ref{comp-maps}.

Recall that for the sequences $\mathcal{F}\bigl(\mathbb{B}(n),m\bigr)$ and, in particular, for the sequences $\mathcal{F}\bigl(\mathbb{B}(2m),m\bigr)$, it is convenient to treat the subsequences
\begin{align}
\mathcal{F}^{\leq\frac{1}{2}}\bigl(\mathbb{B}(n),m\bigr):&=\bigl(\tfrac{h}{k}\in\mathcal{F}\bigl(\mathbb{B}(n),m\bigr):\ \tfrac{h}{k}\leq\tfrac{1}{2}\bigr)\\
\intertext{and}
\mathcal{F}^{\geq\frac{1}{2}}\bigl(\mathbb{B}(n),m\bigr):&=\bigl(\tfrac{h}{k}\in\mathcal{F}\bigl(\mathbb{B}(n),m\bigr):\ \tfrac{h}{k}\geq\tfrac{1}{2}\bigr)
\end{align}
separately~\cite[Thm.~5, Lem.~3, Cor.~4]{M-Integers-I}:

\noindent$\bullet$ The maps
\begin{align}
\label{eq:4}
\mathcal{F}_{m}
&\to\mathcal{F}^{\leq\frac{1}{2}}\bigl(\mathbb{B}(2m),m\bigr)\; , &
\left[
\right]\; .
\end{align}
}
\end{lemma}

\begin{proof} (i): (\ref{eq:17}) $\Leftarrow$ (\ref{eq:4}); (\ref{eq:18}) $\Leftarrow$ (\ref{eq:9}) \& (\ref{eq:4});
(\ref{eq:21}) $\Leftarrow$ (\ref{eq:8}) \& (\ref{eq:4}); (\ref{eq:22}) $\Leftarrow$ (\ref{eq:4}); (\ref{eq:23})~$\Leftarrow$~(\ref{eq:15});
(\ref{eq:24}) $\Leftarrow$ (\ref{eq:9}) \& (\ref{eq:15}); (\ref{eq:25}) $\Leftarrow$ (\ref{eq:8}) \& (\ref{eq:15});
(\ref{eq:26}) $\Leftarrow$ (\ref{eq:15}).

(ii): (\ref{eq:27}) $\Leftarrow$ (\ref{eq:6}) \& (\ref{eq:17}); (\ref{eq:28}) $\Leftarrow$ (\ref{eq:7}) \& (\ref{eq:28});
(\ref{eq:29}) $\Leftarrow$ (\ref{eq:6}) \& (\ref{eq:21});  (\ref{eq:30})~$\Leftarrow$~(\ref{eq:7})~\&~(\ref{eq:22});
(\ref{eq:31}) $\Leftarrow$ (\ref{eq:6}) \& (\ref{eq:23}); (\ref{eq:32}) $\Leftarrow$ (\ref{eq:7}) \& (\ref{eq:24});
(\ref{eq:33}) $\Leftarrow$ (\ref{eq:6}) \& (\ref{eq:25}); (\ref{eq:34}) $\Leftarrow$ (\ref{eq:7}) \& (\ref{eq:26}).
\end{proof}

Let $\mathcal{C}$ be an increasing sequence of irreducible fractions $\tfrac{h}{k}$ written in vector form $\left[
\right]\; .
\end{align*}
}
\end{proof}

\begin{example}
\label{th:6}
Suppose $m:=1$, $s:=1$, and consider the sequences
{\scriptsize
\begin{align*}
\mathcal{F}_{2^{s+2}m} &= \bigl(\tfrac{0}{1}<\tfrac{1}{8}<\tfrac{1}{7}<\tfrac{1}{6}<\tfrac{1}{5}<\tfrac{1}{4}<\tfrac{2}{7}
<\tfrac{1}{3}<\tfrac{3}{8}<\tfrac{2}{5}<\tfrac{3}{7}<
\tfrac{1}{2}<\tfrac{4}{7}<\tfrac{3}{5}<\tfrac{5}{8}<\tfrac{2}{3}<\tfrac{5}{7}<\tfrac{3}{4}<\tfrac{4}{5}<\tfrac{5}{6}<\tfrac{6}{7}<
\tfrac{7}{8}<\tfrac{1}{1}\bigr)\; ,\\
\mathcal{F}\bigl(\mathbb{B}(2^{s+2}m),2^{s+1}m\bigr) &= \bigl(\tfrac{0}{1}<\phantom{\tfrac{1}{8}}<\phantom{\tfrac{1}{7}}<\phantom{\tfrac{1}{6}}<\tfrac{1}{5}<\tfrac{1}{4}<\phantom{\tfrac{2}{7}}
<\tfrac{1}{3}<\phantom{\tfrac{3}{8}}<\tfrac{2}{5}<\tfrac{3}{7}<
\tfrac{1}{2}<\tfrac{4}{7}<\tfrac{3}{5}<\phantom{\tfrac{5}{8}}<\tfrac{2}{3}<\phantom{\tfrac{5}{7}}<\tfrac{3}{4}<\tfrac{4}{5}
<\phantom{\tfrac{5}{6}}<\phantom{\tfrac{6}{7}}<
\phantom{\tfrac{7}{8}}<\tfrac{1}{1}\bigr)\; ,\\
\mathcal{F}_{2^{s+1}m} &= \bigl(\tfrac{0}{1}<\phantom{\tfrac{1}{8}}<\phantom{\tfrac{1}{7}}<\phantom{\tfrac{1}{6}}<\phantom{\tfrac{1}{5}}
<\tfrac{1}{4}<\phantom{\tfrac{2}{7}}
<\tfrac{1}{3}<\phantom{\tfrac{3}{8}}<\phantom{\tfrac{2}{5}}<\phantom{\tfrac{3}{7}}<
\tfrac{1}{2}<\phantom{\tfrac{4}{7}}<\phantom{\tfrac{3}{5}}<\phantom{\tfrac{5}{8}}<\tfrac{2}{3}<\phantom{\tfrac{5}{7}}
<\tfrac{3}{4}<\phantom{\tfrac{4}{5}}<\phantom{\tfrac{5}{6}}<\phantom{\tfrac{6}{7}}<
\phantom{\tfrac{7}{8}}<\tfrac{1}{1}\bigr)\; ,\\
\mathcal{F}\bigl(\mathbb{B}(2^{s+1}m),2^s m\bigr) &= \bigl(\tfrac{0}{1}<\phantom{\tfrac{1}{8}}<\phantom{\tfrac{1}{7}}<\phantom{\tfrac{1}{6}}<\phantom{\tfrac{1}{5}}
<\phantom{\tfrac{1}{4}}<\phantom{\tfrac{2}{7}}
<\tfrac{1}{3}<\phantom{\tfrac{3}{8}}<\phantom{\tfrac{2}{5}}<\phantom{\tfrac{3}{7}}<
\tfrac{1}{2}<\phantom{\tfrac{4}{7}}<\phantom{\tfrac{3}{5}}<\phantom{\tfrac{5}{8}}<\tfrac{2}{3}<\phantom{\tfrac{5}{7}}
<\phantom{\tfrac{3}{4}}<\phantom{\tfrac{4}{5}}<\phantom{\tfrac{5}{6}}<\phantom{\tfrac{6}{7}}<
\phantom{\tfrac{7}{8}}<\tfrac{1}{1}\bigr)\; ,\\
\mathcal{F}_{2^s m}&\phantom{=}\\
=\mathcal{F}\bigl(\mathbb{B}(2^s m),2^{s-1}m\bigr) &= \bigl(\tfrac{0}{1}<\phantom{\tfrac{1}{8}}<\phantom{\tfrac{1}{7}}<\phantom{\tfrac{1}{6}}<\phantom{\tfrac{1}{5}}
<\phantom{\tfrac{1}{4}}<\phantom{\tfrac{2}{7}}
<\phantom{\tfrac{1}{3}}<\phantom{\tfrac{3}{8}}<\phantom{\tfrac{2}{5}}<\phantom{\tfrac{3}{7}}<
\tfrac{1}{2}<\phantom{\tfrac{4}{7}}<\phantom{\tfrac{3}{5}}<\phantom{\tfrac{5}{8}}<\phantom{\tfrac{2}{3}}<\phantom{\tfrac{5}{7}}
<\phantom{\tfrac{3}{4}}<\phantom{\tfrac{4}{5}}<\phantom{\tfrac{5}{6}}<\phantom{\tfrac{6}{7}}<
\phantom{\tfrac{7}{8}}<\tfrac{1}{1}\bigr)\; ,\\
\mathcal{F}_{2^{s-1}m}&= \bigl(\tfrac{0}{1}<\phantom{\tfrac{1}{8}}<\phantom{\tfrac{1}{7}}<\phantom{\tfrac{1}{6}}<\phantom{\tfrac{1}{5}}
<\phantom{\tfrac{1}{4}}<\phantom{\tfrac{2}{7}}
<\phantom{\tfrac{1}{3}}<\phantom{\tfrac{3}{8}}<\phantom{\tfrac{2}{5}}<\phantom{\tfrac{3}{7}}<
\phantom{\tfrac{1}{2}}<\phantom{\tfrac{4}{7}}<\phantom{\tfrac{3}{5}}<\phantom{\tfrac{5}{8}}<\phantom{\tfrac{2}{3}}<\phantom{\tfrac{5}{7}}
<\phantom{\tfrac{3}{4}}<\phantom{\tfrac{4}{5}}<\phantom{\tfrac{5}{6}}<\phantom{\tfrac{6}{7}}<
\phantom{\tfrac{7}{8}}<\tfrac{1}{1}\bigr)\; .
\end{align*}
}
It follows from~Eq.~{\rm(\ref{eq:35})} that the map
{\small
\begin{align*}
\mathcal{F}_{2^{s+2}m}\supset\left[\begin{smallmatrix}1&0\\2&1\end{smallmatrix}\right]\cdot\mathcal{F}_{2^s m}
=\left[\begin{smallmatrix}1&0\\2&1\end{smallmatrix}\right]\cdot
\Bigl(\left[\begin{smallmatrix}\!0\!\\ \!1\!\end{smallmatrix}\right],\left[\begin{smallmatrix}\!1\!\\ \!2\!\end{smallmatrix}\right],\left[\begin{smallmatrix}\!1\!\\ \!1\!\end{smallmatrix}\right]\Bigr)
&=\Bigl(\left[\begin{smallmatrix}\!0\!\\ \!1\!\end{smallmatrix}\right],\left[\begin{smallmatrix}\!1\!\\ \!4\!\end{smallmatrix}\right],\left[\begin{smallmatrix}\!1\!\\ \!3\!\end{smallmatrix}\right]\Bigr)\\
\to
\left[\begin{smallmatrix}0&1\\-1&3\end{smallmatrix}\right]\cdot\mathcal{F}_{2^s m}=
\left[\begin{smallmatrix}0&1\\-1&3\end{smallmatrix}\right]\cdot
\Bigl(\left[\begin{smallmatrix}\!0\!\\ \!1\!\end{smallmatrix}\right],\left[\begin{smallmatrix}\!1\!\\ \!2\!\end{smallmatrix}\right],\left[\begin{smallmatrix}\!1\!\\ \!1\!\end{smallmatrix}\right]\Bigr)
&=\Bigl(\left[\begin{smallmatrix}\!1\!\\ \!3\!\end{smallmatrix}\right],\left[\begin{smallmatrix}\!2\!\\ \!5\!\end{smallmatrix}\right],\left[\begin{smallmatrix}\!1\!\\ \!2\!\end{smallmatrix}\right]\Bigr)\subset\mathcal{F}_{2^{s+2}m}
\; , \ \ \
\left[\begin{smallmatrix}\!h\!\\ \!k\!\end{smallmatrix}\right]\mapsto
\left[\begin{smallmatrix}-2&1\\-7&3\end{smallmatrix}\right]\cdot
\left[\begin{smallmatrix}\!h\!\\ \!k\!\end{smallmatrix}\right]\; ,
\end{align*}
}
is order--preserving and bijective. Eqs.~{\rm(\ref{eq:47})} and~{\rm(\ref{eq:60})} imply that the maps
{\small
\begin{align*}
\mathcal{F}_{2^{s+2}m}\supset\Bigl(\left[\begin{smallmatrix}\!0\!\\ \!1\!\end{smallmatrix}\right],\left[\begin{smallmatrix}\!1\!\\ \!4\!\end{smallmatrix}\right],\left[\begin{smallmatrix}\!1\!\\ \!3\!\end{smallmatrix}\right]\Bigr)
&\to
\Bigl(\left[\begin{smallmatrix}\!1\!\\ \!3\!\end{smallmatrix}\right],\left[\begin{smallmatrix}\!2\!\\ \!5\!\end{smallmatrix}\right],\left[\begin{smallmatrix}\!1\!\\ \!2\!\end{smallmatrix}\right]\Bigr)\subset\mathcal{F}_{2^{s+2}m}
\; , &
\left[\begin{smallmatrix}\!h\!\\ \!k\!\end{smallmatrix}\right]&\mapsto
\left[\begin{smallmatrix}-2&1\\-3&2\end{smallmatrix}\right]\cdot
\left[\begin{smallmatrix}\!h\!\\ \!k\!\end{smallmatrix}\right]\; ,\\
\intertext
{\normalsize and}
\mathcal{F}_{2^{s+2}m}\supset\Bigl(\left[\begin{smallmatrix}\!1\!\\ \!3\!\end{smallmatrix}\right],\left[\begin{smallmatrix}\!2\!\\ \!5\!\end{smallmatrix}\right],\left[\begin{smallmatrix}\!1\!\\ \!2\!\end{smallmatrix}\right]\Bigr)
&\to
\Bigl(\left[\begin{smallmatrix}\!1\!\\ \!3\!\end{smallmatrix}\right],\left[\begin{smallmatrix}\!2\!\\ \!5\!\end{smallmatrix}\right],\left[\begin{smallmatrix}\!1\!\\ \!2\!\end{smallmatrix}\right]\Bigr)
\; , &
\left[\begin{smallmatrix}\!h\!\\ \!k\!\end{smallmatrix}\right]&\mapsto\!\!
\underset{\overset{\uparrow}{{\text{\rm involutory}}}}{\left[\begin{smallmatrix}1&0\\5&-1\end{smallmatrix}\right]}\!\!
\cdot
\left[\begin{smallmatrix}\!h\!\\ \!k\!\end{smallmatrix}\right]\; ,\ \ \ \ \
\left[\begin{smallmatrix}\!2\!\\ \!5\!\end{smallmatrix}\right]\mapsto\left[\begin{smallmatrix}\!2\!\\ \!5\!\end{smallmatrix}\right]\; ,
\end{align*}
}
are order--reversing and bijective.
\end{example}

\section{Farey (Sub)sequences and Monotone Maps}

\label{comp-maps}

The following statement, that uses the approach sketched in~Remark~\ref{th:4}(i), is based on the properties of maps~(\ref{eq:4}) and~(\ref{eq:15}) recalled in~Section~\ref{section2s}.

\begin{lemma}
\label{th:3}
Let $m$ and $n$ be positive integers, $n\geq 2m$. Suppose $s:=\lfloor\log_2 (n/m)\rfloor$.

For any ordered collection
\begin{equation}
(\boldsymbol{M}_1,\ldots,\boldsymbol{M}_{s})\in\left\{\left[\begin{smallmatrix}1&0\\ 1&1\end{smallmatrix}\right],\left[\begin{smallmatrix}0&1\\-1&2\end{smallmatrix}\right]\right\}^{s}\; ,
\end{equation}
of length $s$, whose entries are elements $\left[\begin{smallmatrix}1&0\\ 1&1\end{smallmatrix}\right]$ or $\left[\begin{smallmatrix}0&1\\-1&2\end{smallmatrix}\right]$ of\/ $\SL2Z$, the map
\begin{equation}
\label{eq:82}
\begin{split}
\mathcal{F}_{m}
&\to\mathcal{F}_{n}\; , \\
\left[\begin{smallmatrix}\!h\!\\ \!k\!\end{smallmatrix}\right]&\mapsto
\prod_{i=1}^{s}\boldsymbol{M}_{s-i+1}
\cdot
\left[\begin{smallmatrix}\!h\!\\ \!k\!\end{smallmatrix}\right]\ \ \
\begin{cases}
\leq\frac{1}{2}\; ,&\text{if\/ $\boldsymbol{M}_{s}=\left[\begin{smallmatrix}1&0\\ 1&1\end{smallmatrix}\right]$}\; ,\\
\geq\frac{1}{2}\; ,&\text{if\/ $\boldsymbol{M}_{s}=\left[\begin{smallmatrix}0&1\\-1&2\end{smallmatrix}\right]$}\; ,
\end{cases}
\end{split}
\end{equation}
is order--preserving and injective.
\end{lemma}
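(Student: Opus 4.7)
The plan is to induct on $s$, using maps~(\ref{eq:4}) and~(\ref{eq:15}) as the single step of the induction and exploiting the order--preserving subsequence inclusion $\mathcal{F}_{2^s m}\subseteq\mathcal{F}_n$, which holds because the defining equality $s=\lfloor\log_2(n/m)\rfloor$ forces $2^s m\leq n$.

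For the base case $s=1$ the matrix $\boldsymbol{M}_1$ equals one of $\left[\begin{smallmatrix}1&0\\1&1\end{smallmatrix}\right]$ or $\left[\begin{smallmatrix}0&1\\-1&2\end{smallmatrix}\right]$, and by~(\ref{eq:4}) or~(\ref{eq:15}) respectively, the corresponding map $\mathcal{F}_m\to\mathcal{F}^{\leq\frac{1}{2}}\bigl(\mathbb{B}(2m),m\bigr)$ or $\mathcal{F}_m\to\mathcal{F}^{\geq\frac{1}{2}}\bigl(\mathbb{B}(2m),m\bigr)$ is an order--preserving bijection. Its target embeds into $\mathcal{F}_{2m}\subseteq\mathcal{F}_n$ as an order--preserving subsequence, so the composite $\mathcal{F}_m\to\mathcal{F}_n$ is order--preserving and injective, with image in the half indicated by $\boldsymbol{M}_1=\boldsymbol{M}_s$.

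For the inductive step I apply the length-$(s-1)$ version of the claim to $(\boldsymbol{M}_1,\ldots,\boldsymbol{M}_{s-1})$, obtaining an order--preserving injection $\mathcal{F}_m\to\mathcal{F}_{2^{s-1}m}$ realized by left--multiplication with $\boldsymbol{M}_{s-1}\cdots\boldsymbol{M}_1$. I then post--compose with~(\ref{eq:4}) or~(\ref{eq:15}) applied with $m$ replaced by $2^{s-1}m$: depending on which of the two matrices $\boldsymbol{M}_s$ equals, this is an order--preserving bijection $\mathcal{F}_{2^{s-1}m}\to\mathcal{F}^{\leq\frac{1}{2}}\bigl(\mathbb{B}(2^s m),2^{s-1}m\bigr)$ or $\mathcal{F}_{2^{s-1}m}\to\mathcal{F}^{\geq\frac{1}{2}}\bigl(\mathbb{B}(2^s m),2^{s-1}m\bigr)$, whose image lies inside $\mathcal{F}_{2^s m}\subseteq\mathcal{F}_n$. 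The resulting composite $\mathcal{F}_m\to\mathcal{F}_n$ is therefore order--preserving and injective, and its matrix is $\boldsymbol{M}_s\cdot(\boldsymbol{M}_{s-1}\cdots\boldsymbol{M}_1)=\prod_{i=1}^{s}\boldsymbol{M}_{s-i+1}$, matching~(\ref{eq:82}).

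I expect no serious obstacle here. The only items requiring care are the convention that the leftmost factor in the displayed product is the map applied last (so that the half in which the image sits is controlled by $\boldsymbol{M}_s$, exactly as stated), and the single use of the hypothesis $s=\lfloor\log_2(n/m)\rfloor$, which delivers $2^s m\leq n$ and hence the order--preserving inclusion of the final target $\mathcal{F}_{2^s m}$ into $\mathcal{F}_n$.
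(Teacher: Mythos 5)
Your proof is correct and is essentially the paper's own argument: the paper states Lemma~\ref{th:3} without a separate proof, presenting it as the iterated composition of the order--preserving bijections~(\ref{eq:4}) and~(\ref{eq:15}) in the manner sketched in Remark~\ref{th:4}(i), which is exactly what your induction on $s$ formalizes. The two points you single out---that the leftmost factor $\boldsymbol{M}_{s}$ is the map applied last, so it determines the half of $\mathcal{F}\bigl(\mathbb{B}(2^{s}m),2^{s-1}m\bigr)$ containing the image, and that $s=\lfloor\log_2(n/m)\rfloor$ gives $2^{s}m\leq n$ and hence the order--preserving inclusion $\mathcal{F}_{2^{s}m}\subseteq\mathcal{F}_{n}$---are handled correctly.
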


Since the matrix product from~Eq.~(\ref{eq:82}) is an element of~$\SL2Z$, the image of any Farey sequence retains the two generic properties of such sequences:

\begin{remark}
If\hfill $\tfrac{h_j}{k_j}<\tfrac{h_{j+1}}{k_{j+1}}<\tfrac{h_{j+2}}{k_{j+2}}$\hfill are\hfill three\hfill consecutive\hfill fractions\hfill in\hfill the\hfill subse-\newline quence~$\prod_{i=1}^{s}\boldsymbol{M}_{s-i+1}\cdot\mathcal{F}_{m}$ of the Farey sequence $\mathcal{F}_{n}$, considered in Lemma~{\rm{\ref{th:3}}}, then~$k_j h_{j+1}$ $-h_j k_{j+1}=1$, and $\tfrac{h_{j+1}}{k_{j+1}}=
\tfrac{h_j+h_{j+2}}{\gcd(h_j+h_{j+2},k_j+k_{j+2})}\!\!\Bigm/\!\!\tfrac{k_j+k_{j+2}}{\gcd(h_j+h_{j+2},k_j+k_{j+2})}$.
\end{remark}

We\hfill conclude\hfill this\hfill section\hfill with\hfill a\hfill straightforward\hfill extension\hfill of\hfill Proposition~\ref{th:5},\hfill see\newline also~Example~\ref{th:6}.

\begin{theorem}
\label{th:7}
Let $m$ and $n$ be positive integers, $n\geq 2m$. Suppose $s:=\lfloor\log_2 (n/m)\rfloor$. Given two ordered collections of matrices
\begin{equation}
(\boldsymbol{M}_1,\ldots,\boldsymbol{M}_{s}),\; (\boldsymbol{N}_1,\ldots,\boldsymbol{N}_{s})\in\left\{\left[\begin{smallmatrix}1&0\\ 1&1\end{smallmatrix}\right],\left[\begin{smallmatrix}0&1\\-1&2\end{smallmatrix}\right]\right\}^{s}\; ,
\end{equation}
suppose
\begin{equation}
\mathbf{M}:=\prod_{i=1}^{s}\boldsymbol{M}_{s-i+1}\; ,\ \ \ \mathbf{N}:=\prod_{i=1}^{s}\boldsymbol{N}_{s-i+1}\; .
\end{equation}

The map
\begin{equation}
\begin{split}
\mathcal{F}_{n}\supset\mathbf{M}\cdot\mathcal{F}_{m}&\to
\mathbf{N}\cdot\mathcal{F}_{m}\subset\mathcal{F}_{n}\; ,\\
\left[\begin{smallmatrix}\!h\!\\ \!k\!\end{smallmatrix}\right]&\mapsto
\mathbf{N}\cdot
\mathbf{M}^{-1}
\cdot\left[\begin{smallmatrix}\!h\!\\ \!k\!\end{smallmatrix}\right]\; ,
\end{split}
\end{equation}
between subsequences\/ $\mathbf{M}\cdot\mathcal{F}_{m}$ and\/ $\mathbf{N}\cdot\mathcal{F}_{m}$ of the Farey sequence\/ $\mathcal{F}_{n}$, is order--preserving and bijective.

The map
\begin{equation}
\begin{split}
\mathcal{F}_{n}\supset\mathbf{M}\cdot\mathcal{F}_{m}&\to
\mathbf{N}\cdot\mathcal{F}_{m}\subset\mathcal{F}_{n}\; ,\\
\left[\begin{smallmatrix}\!h\!\\ \!k\!\end{smallmatrix}\right]&\mapsto
\mathbf{N}\cdot
\left[\begin{smallmatrix}-1&1\\ 0&1\end{smallmatrix}\right]\cdot
\mathbf{M}^{-1}
\cdot\left[\begin{smallmatrix}\!h\!\\ \!k\!\end{smallmatrix}\right]\; ,
\end{split}
\end{equation}
is order--reversing and bijective; in particular, the map
\begin{equation}
\begin{split}
\mathcal{F}_{n}\supset\mathbf{M}\cdot\mathcal{F}_{m}&\to
\mathbf{M}\cdot\mathcal{F}_{m}\; ,\\
\left[\begin{smallmatrix}\!h\!\\ \!k\!\end{smallmatrix}\right]&\mapsto
\underbrace{\mathbf{M}\cdot
\left[\begin{smallmatrix}-1&1\\ 0&1\end{smallmatrix}\right]\cdot
\mathbf{M}^{-1}}_{\text{\rm involutory}}
\cdot\left[\begin{smallmatrix}\!h\!\\ \!k\!\end{smallmatrix}\right]\; ,
\end{split}
\end{equation}
with the fixed point\/ $\mathbf{M}\cdot\left[\begin{smallmatrix}\!1\!\\ \!2\!\end{smallmatrix}\right]$ in the case of $m>1$, is order--reversing and bijective.
\end{theorem}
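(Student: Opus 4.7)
The plan is to factor each map as a composition through the common intermediate sequence $\mathcal{F}_{m}$, using the fact that $\mathbf{M}$ and $\mathbf{N}$ each yield order--preserving bijections between $\mathcal{F}_{m}$ and their images in $\mathcal{F}_{n}$.

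First, by Lemma~\ref{th:3} the map $\left[\begin{smallmatrix}\!h\!\\ \!k\!\end{smallmatrix}\right]\mapsto \mathbf{M}\cdot\left[\begin{smallmatrix}\!h\!\\ \!k\!\end{smallmatrix}\right]$ is an order--preserving injection $\mathcal{F}_{m}\hookrightarrow\mathcal{F}_{n}$ whose image is, by definition, $\mathbf{M}\cdot\mathcal{F}_{m}$; so it restricts to an order--preserving bijection $\mathcal{F}_{m}\to\mathbf{M}\cdot\mathcal{F}_{m}$, with order--preserving inverse $\left[\begin{smallmatrix}\!h\!\\ \!k\!\end{smallmatrix}\right]\mapsto\mathbf{M}^{-1}\cdot\left[\begin{smallmatrix}\!h\!\\ \!k\!\end{smallmatrix}\right]$. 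The same holds for $\mathbf{N}$. Composing these gives the order--preserving bijection $\mathbf{M}\cdot\mathcal{F}_{m}\to\mathbf{N}\cdot\mathcal{F}_{m}$ represented by $\mathbf{N}\cdot\mathbf{M}^{-1}$, which is the first claim.

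For the second claim, I insert into this composition the order--reversing bijection $\mathcal{F}_{m}\to\mathcal{F}_{m}$ given by Eq.~(\ref{eq:1}), namely $\left[\begin{smallmatrix}\!h\!\\ \!k\!\end{smallmatrix}\right]\mapsto\left[\begin{smallmatrix}-1&1\\ 0&1\end{smallmatrix}\right]\cdot\left[\begin{smallmatrix}\!h\!\\ \!k\!\end{smallmatrix}\right]$. The resulting composite $\left[\begin{smallmatrix}\!h\!\\ \!k\!\end{smallmatrix}\right]\mapsto\mathbf{N}\cdot\left[\begin{smallmatrix}-1&1\\ 0&1\end{smallmatrix}\right]\cdot\mathbf{M}^{-1}\cdot\left[\begin{smallmatrix}\!h\!\\ \!k\!\end{smallmatrix}\right]$ is a bijection $\mathbf{M}\cdot\mathcal{F}_{m}\to\mathbf{N}\cdot\mathcal{F}_{m}$; since exactly one of its three factors reverses order while the other two preserve order, the composite reverses order.

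For the third claim, specialize $\mathbf{N}:=\mathbf{M}$ to obtain the endomap $\left[\begin{smallmatrix}\!h\!\\ \!k\!\end{smallmatrix}\right]\mapsto\mathbf{M}\cdot\left[\begin{smallmatrix}-1&1\\ 0&1\end{smallmatrix}\right]\cdot\mathbf{M}^{-1}\cdot\left[\begin{smallmatrix}\!h\!\\ \!k\!\end{smallmatrix}\right]$ of $\mathbf{M}\cdot\mathcal{F}_{m}$. A direct computation shows $\left[\begin{smallmatrix}-1&1\\ 0&1\end{smallmatrix}\right]^{2}=\left[\begin{smallmatrix}1&0\\ 0&1\end{smallmatrix}\right]$, so the conjugate $\mathbf{M}\cdot\left[\begin{smallmatrix}-1&1\\ 0&1\end{smallmatrix}\right]\cdot\mathbf{M}^{-1}$ is involutory as well. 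The only fixed point of $\left[\begin{smallmatrix}-1&1\\ 0&1\end{smallmatrix}\right]$ on $\mathcal{F}_{m}$ is the vector $\left[\begin{smallmatrix}\!1\!\\ \!2\!\end{smallmatrix}\right]$ (corresponding to the unique rational fixed by $\tfrac{h}{k}\mapsto\tfrac{k-h}{k}$), which belongs to $\mathcal{F}_{m}$ whenever $m>1$; conjugation therefore transports it to the fixed point $\mathbf{M}\cdot\left[\begin{smallmatrix}\!1\!\\ \!2\!\end{smallmatrix}\right]$ of the endomap.

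The argument is essentially a formal composition of facts already established in the excerpt, so no serious obstacle arises; the only thing to double--check is that $\mathbf{M}^{-1}$ does land back in $\mathcal{F}_{m}$ as claimed (equivalently, that the image $\mathbf{M}\cdot\mathcal{F}_{m}$ is identified with $\mathcal{F}_{m}$ as an ordered set via $\mathbf{M}$), but this is exactly the content of Lemma~\ref{th:3}. Thus the three maps stated in Theorem~\ref{th:7} are obtained, with the asserted monotonicity and involutory properties, simply by chasing the diagram $\mathbf{M}\cdot\mathcal{F}_{m}\to\mathcal{F}_{m}\to\mathcal{F}_{m}\to\mathbf{N}\cdot\mathcal{F}_{m}$ (with or without the order--reversing middle arrow).
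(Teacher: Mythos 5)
Your proposal is correct and follows essentially the route the paper intends: the paper presents Theorem~\ref{th:7} as a straightforward extension of Proposition~\ref{th:5}, whose proof is exactly this factorization of each map as $\mathbf{N}\cdot\mathbf{M}^{-1}$, respectively $\mathbf{N}\cdot\left[\begin{smallmatrix}-1&1\\ 0&1\end{smallmatrix}\right]\cdot\mathbf{M}^{-1}$, through $\mathcal{F}_m$, using Lemma~\ref{th:3} for the order--preserving injections and Eq.~(\ref{eq:1}) for the order--reversing involution. Your treatment of the involutory conjugate and of the fixed point $\mathbf{M}\cdot\left[\begin{smallmatrix}\!1\!\\ \!2\!\end{smallmatrix}\right]$ (present exactly when $\tfrac{1}{2}\in\mathcal{F}_m$, i.e.\ $m>1$) matches the paper's intent.
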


\section{Appendix: Miscellany}

\label{sec-app}

\subsection{Farey (Sub)sequences: List of Notation}

{\small
\renewcommand{\arraystretch}{1.4}
\begin{center}
\begin{tabular}{|c|c|}
\hline
\it Sequence & \it Definition / Description\\
\hline
\hline
$\mathcal{F}_n$ & $\{\tfrac{0}{1}\}\dot{\cup}\!\left(\tfrac{h}{k}\in\mathbb{Q}:\ 1\leq h< k\leq n,\ \gcd(k,h)=1\right)\!\dot{\cup}\{\tfrac{1}{1}\}$\\
\hline
$\mathcal{F}_n^m$,\ $m\geq 1$ & $\left(\tfrac{h}{k}\in\mathcal{F}_n:\ h\leq m\right)$\\
\hline
$\mathcal{G}_n^m$,\ $m\leq n-1$ & $\left(\tfrac{h}{k}\in\mathcal{F}_n:\ m+k-n\leq h\right)$\\
\hline\hline
$\mathcal{F}(\mathbb{B}(n),m)$ & $\mathcal{F}_n^m\cap\mathcal{G}_n^m=\left(\tfrac{h}{k}\in\mathcal{F}_n:\ m+k-n\leq h\leq m\right)$\\
\hline
$\mathcal{F}\bigl(\mathbb{B}(2m),m\bigr)$ &
$\left(\tfrac{h}{k}\in\mathcal{F}_{2m}:\ k-m\leq h\leq m\right)$\\
\hline
$\mathcal{F}^{\leq\frac{1}{2}}(\mathbb{B}(n),m)$ & $\left(\tfrac{h}{k}\in\mathcal{F}(\mathbb{B}(n),m):\ \tfrac{h}{k}\leq\tfrac{1}{2}\right)$\\
\hline
$\mathcal{F}^{\geq\frac{1}{2}}(\mathbb{B}(n),m)$ & $\left(\tfrac{h}{k}\in\mathcal{F}(\mathbb{B}(n),m):\ \tfrac{h}{k}\geq\tfrac{1}{2}\right)$\\
\hline
\hline
$\mathcal{F}\bigl(\mathbb{B}(n),m\bigr)^{\ell}$, $1\leq\ell\leq m$ & $\left(\tfrac{h}{k}\in\mathcal{F}\bigl(\mathbb{B}(n),m\bigr):\ h\leq \ell\right)=\mathcal{F}\bigl(\mathbb{B}(n-m+\ell),\ell\bigr)$\\
\hline
$\mathcal{F}\bigl(\mathbb{B}(2m),m\bigr)^{\ell}$
&
$\mathcal{F}\bigl(\mathbb{B}(m+\ell),\ell\bigr)$\\
\hline\hline
$\mathcal{G}\bigl(\mathbb{B}(n),m\bigr)^{\ell}$, $m\leq\ell\leq n-1$ &
$\left(\tfrac{h}{k}\in\mathcal{F}\bigl(\mathbb{B}(n),m\bigr):\ \ell+k-n\leq h\right)=\mathcal{F}\bigl(\mathbb{B}(n+m-\ell),m\bigr)$\\
\hline
$\mathcal{G}\bigl(\mathbb{B}(2m),m\bigr)^{\ell}$
&
$\mathcal{F}\bigl(\mathbb{B}(3m-\ell),m\bigr)$\\
\hline
\end{tabular}
\end{center}
\renewcommand{\arraystretch}{1.0}
}

\subsection{The Cardinalities of Subsequences}

Let us calculate the number of fractions in several subsequences of the~Farey sequences; $\bar\mu(\cdot)$ denotes the number--theoretic M\"{o}bius function.

We have
\begin{equation}
\label{eq:67}
\begin{split}
|\mathcal{F}_n^m|-\bigl|\mathcal{F}\bigl(\mathbb{B}(n),m\bigr)\bigr|&=
\underbrace{\tfrac{3}{2}+\sum_{d=1}^m\bar\mu(d)\left\lfloor\!\tfrac{m}{d}\!\right\rfloor\left(\left\lfloor\!\tfrac{n}{d}\!\right\rfloor
-\tfrac{1}{2}\left\lfloor\!\tfrac{m}{d}\!\right\rfloor\right)}_{|\mathcal{F}_n^m|}-
\biggl(2+
\sum_{d=1}^m\bar\mu(d)\left\lfloor\!\tfrac{m}{d}\!\right\rfloor\left\lfloor\!\tfrac{n-m}{d}\!\right\rfloor\biggr)
\\&=-\tfrac{1}{2}+\sum_{d=1}^m\bar\mu(d)\left\lfloor\!\tfrac{m}{d}\!\right\rfloor\left(\left\lfloor\!\tfrac{n}{d}\right\rfloor
-\tfrac{1}{2}\left\lfloor\!\tfrac{m}{d}\!\right\rfloor-\left\lfloor\!\tfrac{n-m}{d}\!\right\rfloor\right)\; ,
\end{split}
\end{equation}
and
\begin{equation}
\begin{split}
|\mathcal{G}_n^m|-\bigl|\mathcal{F}\bigl(\mathbb{B}(n),m\bigr)\bigr|&=
\underbrace{\tfrac{3}{2}+\sum_{d=1}^{n-m}\bar\mu(d)\left\lfloor\!\tfrac{n-m}{d}\!\right\rfloor\left(\left\lfloor\!\tfrac{n}{d}\!\right\rfloor
-\tfrac{1}{2}\left\lfloor\!\tfrac{n-m}{d}\!\right\rfloor\right)}_{|\mathcal{G}_n^m|}-
\biggl(2+\sum_{d=1}^{n-m}
\bar\mu(d)\left\lfloor\!\tfrac{m}{d}\!\right\rfloor\left\lfloor\!\tfrac{n-m}{d}\!\right\rfloor\biggr)
\\&=-\tfrac{1}{2}+\sum_{d=1}^{n-m}\bar\mu(d)\left\lfloor\!\tfrac{n-m}{d}\!\right\rfloor\left(\left\lfloor\!\tfrac{n}{d}\right\rfloor
-\tfrac{1}{2}\left\lfloor\!\tfrac{n-m}{d}\!\right\rfloor-\left\lfloor\!\tfrac{m}{d}\!\right\rfloor\right)\; ;
\end{split}
\end{equation}
in particular,
\begin{equation}
\begin{split}
|\mathcal{F}_{2m}^m|-\bigl|\mathcal{F}\bigl(\mathbb{B}(2m),m\bigr)\bigr|&=|\mathcal{G}_{2m}^m|-\bigl|\mathcal{F}\bigl(\mathbb{B}(2m),m\bigr)\bigr|\\&=
-\tfrac{1}{2}+\sum_{d=1}^m\bar\mu(d)\left\lfloor\!\tfrac{m}{d}\!\right\rfloor\left(\left\lfloor\!\tfrac{2m}{d}\right\rfloor
-\tfrac{3}{2}\left\lfloor\!\tfrac{m}{d}\!\right\rfloor\right)\; .
\end{split}
\end{equation}
As a consequence,
\begin{equation}
\label{eq:10}
\begin{split}
|\mathcal{F}_n|-\bigl|\mathcal{F}\bigl(\mathbb{B}(n),m\bigr)\bigr|&=
\left(|\mathcal{F}_n^m|-\bigl|\mathcal{F}\bigl(\mathbb{B}(n),m\bigr)\bigr|\right)+
\left(|\mathcal{G}_n^m|-\bigl|\mathcal{F}\bigl(\mathbb{B}(n),m\bigr)\bigr|\right)\\
&=-4+\sum_{d\geq 1}\bar\mu(d)\left\lfloor\!\tfrac{n}{d}\!\right\rfloor
\left(\left\lfloor\!\tfrac{m}{d}\!\right\rfloor+\left\lfloor\!\tfrac{n-m}{d}\!\right\rfloor\right)
\; ,
\end{split}
\end{equation}
and
\begin{equation}
\begin{split}
|\mathcal{F}_{2m}|-\bigl|\mathcal{F}\bigl(\mathbb{B}(2m),m\bigr)\bigr|&=
2\left(|\mathcal{F}_{2m}^m|-\bigl|\mathcal{F}\bigl(\mathbb{B}(2m),m\bigr)\bigr|\right)\\&=
-4+2\sum_{d=1}^m\bar\mu(d)\left\lfloor\!\tfrac{2m}{d}\!\right\rfloor\left\lfloor\!\tfrac{m}{d}\!\right\rfloor\; .
\end{split}
\end{equation}

{\small
\renewcommand{\arraystretch}{1.4}
\begin{center}
\begin{tabular}{|c|c|}
\hline
\it Quantity & \it Formula\\
\hline
\hline
$|\mathcal{F}_n|$
& $\frac{3}{2}+\frac{1}{2}\sum_{d=1}^n\bar\mu(d)\left\lfloor\!\frac{n}{d}\!\right\rfloor^2$
\\
\hline\hline
$|\mathcal{F}_n^m|=|\mathcal{G}_n^{n-m}|$ & $\tfrac{3}{2}+\sum_{d\geq 1}\bar\mu(d)\left\lfloor\!\frac{m}{d}\!\right\rfloor\left(\left\lfloor\!\frac{n}{d}\!\right\rfloor
-\tfrac{1}{2}\left\lfloor\!\frac{m}{d}\!\right\rfloor\right)$\\
\hline
$|\mathcal{F}_{2m}^m|=|\mathcal{G}_{2m}^m|$ &
$\tfrac{3}{2}+\sum_{d=1}^m\bar\mu(d)\left\lfloor\!\tfrac{m}{d}\!\right\rfloor\left(\left\lfloor\!\tfrac{2m}{d}\!\right\rfloor
-\tfrac{1}{2}\left\lfloor\!\tfrac{m}{d}\!\right\rfloor\right)$\\
\hline\hline
$|\mathcal{F}(\mathbb{B}(n),m)|$ & $2+\sum_{d\geq 1}\bar\mu(d)\left\lfloor\!\frac{m}{d}\!\right\rfloor\left\lfloor\!\frac{n-m}{d}\!\right\rfloor$\\
\hline
$|\mathcal{F}(\mathbb{B}(2m),m)|$ & $2+\sum_{d=1}^m\bar\mu(d)\left\lfloor\!\frac{m}{d}\!\right\rfloor^2$\\
\hline
\hline
$|\mathcal{F}_n^m|-\bigl|\mathcal{F}\bigl(\mathbb{B}(n),m\bigr)\bigr|$ &
$-\tfrac{1}{2}+\sum_{d=1}^m\bar\mu(d)\left\lfloor\!\tfrac{m}{d}\!\right\rfloor\left(\left\lfloor\!\tfrac{n}{d}\right\rfloor
-\tfrac{1}{2}\left\lfloor\!\tfrac{m}{d}\!\right\rfloor-\left\lfloor\!\tfrac{n-m}{d}\!\right\rfloor\right)$
\\
\hline
$|\mathcal{G}_n^m|-\bigl|\mathcal{F}\bigl(\mathbb{B}(n),m\bigr)\bigr|$ &
$-\tfrac{1}{2}+\sum_{d=1}^m\bar\mu(d)\left\lfloor\!\tfrac{n-m}{d}\!\right\rfloor\left(\left\lfloor\!\tfrac{n}{d}\right\rfloor
-\tfrac{1}{2}\left\lfloor\!\tfrac{n-m}{d}\!\right\rfloor-\left\lfloor\!\tfrac{m}{d}\!\right\rfloor\right)$
\\
\hline
$|\mathcal{F}_{2m}^m|-\bigl|\mathcal{F}\bigl(\mathbb{B}(2m),m\bigr)\bigr|=|\mathcal{G}_{2m}^m|-\bigl|\mathcal{F}\bigl(\mathbb{B}(2m),m\bigr)\bigr|$ &
$-\tfrac{1}{2}+\sum_{d=1}^m\bar\mu(d)\left\lfloor\!\tfrac{m}{d}\!\right\rfloor\left(\left\lfloor\!\tfrac{2m}{d}\right\rfloor
-\tfrac{3}{2}\left\lfloor\!\tfrac{m}{d}\!\right\rfloor\right)$\\
\hline
\hline
$|\mathcal{F}_n|-\bigl|\mathcal{F}\bigl(\mathbb{B}(n),m\bigr)\bigr|$ &
$-4+\sum_{d\geq 1}\bar\mu(d)\left\lfloor\!\tfrac{n}{d}\!\right\rfloor
\left(\left\lfloor\!\tfrac{m}{d}\!\right\rfloor+\left\lfloor\!\tfrac{n-m}{d}\!\right\rfloor\right)$\\
\hline
$|\mathcal{F}_{2m}|-\bigl|\mathcal{F}\bigl(\mathbb{B}(2m),m\bigr)\bigr|$ &
$-4+2\sum_{d=1}^m\bar\mu(d)\left\lfloor\!\tfrac{2m}{d}\!\right\rfloor\left\lfloor\!\tfrac{m}{d}\!\right\rfloor$\\
\hline
\end{tabular}
\end{center}
\renewcommand{\arraystretch}{1.0}
}
The recursive expression for the number of fractions in the Farey sequence---the basis of similar recursive expressions for the other quantities, which is the inverse of the formula~\mbox{$|\mathcal{F}_n|=\frac{3}{2}+\frac{1}{2}\sum_{d=1}^n\bar\mu(d)\left\lfloor\!\frac{n}{d}\!\right\rfloor^2$}, is as follows:
\begin{equation}
|\mathcal{F}_n|=\frac{1}{2}(n+3)n-\sum_{d=2}^n|\mathcal{F}_{\lfloor n/d\rfloor}|\ ,
\end{equation}
see~\cite[\href{http://oeis.org/search?q=A005728&language=english&go=Search}{A005728}]{OEIS}.

\subsection{Matrix Products}

Let us survey several subproducts of the matrix products mentioned in Remark~\ref{th:4}, Lemma~\ref{th:3}, Theorem~\ref{th:7}, and in the proof of Proposition~\ref{th:5}.

If $j\in\mathbb{Z}$ then
\begin{equation}
\left[\begin{smallmatrix}1&0\\ 1&1\end{smallmatrix}\right]^j=\left[\begin{smallmatrix}1&0\\j&1\end{smallmatrix}\right]\; ,\ \ \
\left[\begin{smallmatrix}0&1\\-1&2\end{smallmatrix}\right]^j=\left[\begin{smallmatrix}1-j&j\\-j&1+j\end{smallmatrix}\right]\; ;\ \ \
\left[\begin{smallmatrix}1&0\\ 1&1\end{smallmatrix}\right]^{-1}=\left[\begin{smallmatrix}1&0\\-1&1\end{smallmatrix}\right]\; ,\ \ \
\left[\begin{smallmatrix}0&1\\-1&2\end{smallmatrix}\right]^{-1}=\left[\begin{smallmatrix}2&-1\\1&0\end{smallmatrix}\right]\; .
\end{equation}

If $i\in\mathbb{Z}$ then
\begin{equation}
\left[\begin{smallmatrix}1&0\\ 1&1\end{smallmatrix}\right]^i\cdot
\left[\begin{smallmatrix}0&1\\-1&2\end{smallmatrix}\right]^j=\left[\begin{smallmatrix}1-j&j\\i-j-ij&1+j+ij\end{smallmatrix}\right]\; ,\ \ \
\left[\begin{smallmatrix}0&1\\-1&2\end{smallmatrix}\right]^i\cdot
\left[\begin{smallmatrix}1&0\\ 1&1\end{smallmatrix}\right]^j=\left[\begin{smallmatrix}1-i+ij&i\\-i+j+ij&1+i\end{smallmatrix}\right]\; ;
\end{equation}
we also have
\begin{align}
\left[\begin{smallmatrix}1&0\\ 1&1\end{smallmatrix}\right]^i\cdot
\left[\begin{smallmatrix}-1&1\\0&1\end{smallmatrix}\right]&=\left[\begin{smallmatrix}-1&1\\-i&1+i\end{smallmatrix}\right]\; , &
\left[\begin{smallmatrix}0&1\\-1&2\end{smallmatrix}\right]^i\cdot
\left[\begin{smallmatrix}-1&1\\ 0&1\end{smallmatrix}\right]&=\left[\begin{smallmatrix}-1+i&1\\i&1\end{smallmatrix}\right]\; ,\\
\left[\begin{smallmatrix}-1&1\\0&1\end{smallmatrix}\right]\cdot\left[\begin{smallmatrix}1&0\\ 1&1\end{smallmatrix}\right]^j\cdot
&=\left[\begin{smallmatrix}-1+j&1\\j&1\end{smallmatrix}\right]\; , & \left[\begin{smallmatrix}-1&1\\ 0&1\end{smallmatrix}\right]\cdot\left[\begin{smallmatrix}0&1\\-1&2\end{smallmatrix}\right]^j
&=\left[\begin{smallmatrix}-1&1\\-j&1+j\end{smallmatrix}\right]\; ,
\end{align}
and
\begin{align}
\left[\begin{smallmatrix}1&0\\ 1&1\end{smallmatrix}\right]^i\cdot
\left[\begin{smallmatrix}-1&1\\0&1\end{smallmatrix}\right]\cdot\left[\begin{smallmatrix}0&1\\-1&2\end{smallmatrix}\right]^j
&=\left[\begin{smallmatrix}-1&1\\-i-j&1+i+j\end{smallmatrix}\right]\; ,\\
\left[\begin{smallmatrix}0&1\\-1&2\end{smallmatrix}\right]^i\cdot
\left[\begin{smallmatrix}-1&1\\0&1\end{smallmatrix}\right]\cdot
\left[\begin{smallmatrix}1&0\\ 1&1\end{smallmatrix}\right]^j\cdot
&=\left[\begin{smallmatrix}-1+i+j&1\\i+j&1\end{smallmatrix}\right]\; ,\\
\left[\begin{smallmatrix}1&0\\ 1&1\end{smallmatrix}\right]^i\cdot
\left[\begin{smallmatrix}-1&1\\0&1\end{smallmatrix}\right]\cdot\left[\begin{smallmatrix}1&0\\ 1&1\end{smallmatrix}\right]^j
&=\left[\begin{smallmatrix}-1+j&1\\-i+j+ij&1+i\end{smallmatrix}\right]\; ,\\
\left[\begin{smallmatrix}0&1\\-1&2\end{smallmatrix}\right]^i\cdot
\left[\begin{smallmatrix}-1&1\\0&1\end{smallmatrix}\right]\cdot
\left[\begin{smallmatrix}0&1\\-1&2\end{smallmatrix}\right]^j
&=\left[\begin{smallmatrix}-1+i-ij&1+ij\\i-j-ij&1+j+ij\end{smallmatrix}\right]\; .
\end{align}

\subsection{Order--reversing and bijective mapping\/ $\frac{h}{k}\mapsto\frac{k-h}{k}$}

In the context of Farey sequences, the mapping $\left[\begin{smallmatrix}\!h\!\\
\!k\!\end{smallmatrix}\right]\mapsto\left[\begin{smallmatrix}-1&1\\0&1\end{smallmatrix}\right]\cdot
\left[\begin{smallmatrix}\!h\!\\ \!k\!\end{smallmatrix}\right]$ determined by the involutory matrix~$\left[\begin{smallmatrix}-1&1\\0&1\end{smallmatrix}\right]$ is ubiquitous:
{\small
\renewcommand{\arraystretch}{1.4}
\begin{center}
\begin{tabular}{|c|rcl|}
\hline
\multicolumn{4}{|c|}{\it Order--reversing and bijective mapping\/ $\frac{h}{k}\mapsto\frac{k-h}{k}$}\\
\hline
\hline
 & $\mathcal{F}_n$ & $\to$ & $\mathcal{F}_n$\\
 & $\mathcal{F}_n^m$ & $\to$ & $\mathcal{G}_n^{n-m}$\\
 & $\mathcal{G}_n^m$ & $\to$ & $\mathcal{F}_n^{n-m}$\\
 & $\mathcal{F}_{2m}^m$ & $\to$ & $\mathcal{G}_{2m}^m$\\
 & $\mathcal{G}_{2m}^m$ & $\to$ & $\mathcal{F}_{2m}^m$\\
$\left[\begin{smallmatrix}\!h\!\\
\!k\!\end{smallmatrix}\right]\mapsto
\left[\begin{smallmatrix}-1&1\\0&1\end{smallmatrix}\right]\cdot
\left[\begin{smallmatrix}\!h\!\\ \!k\!\end{smallmatrix}\right]$
 & $\mathcal{F}(\mathbb{B}(n),m)$ & $\to$ & $\mathcal{F}(\mathbb{B}(n),n-m)$\\
 & $\mathcal{F}(\mathbb{B}(2m),m)$ & $\to$ & $\mathcal{F}(\mathbb{B}(2m),m)$\\
 & $\mathcal{F}(\mathbb{B}(n),m)^{\ell}$ & $\to$ & $\mathcal{F}(\mathbb{B}(n),n-\ell)^{n-m}$\\
 & $\mathcal{G}(\mathbb{B}(n),m)^{\ell}$ & $\to$ & $\mathcal{G}(\mathbb{B}(n),n-\ell)^{n-m}$\\
 & $\mathcal{F}(\mathbb{B}(2m),m)^{\ell}$ & $\to$ & $\mathcal{F}(\mathbb{B}(2m),2m-\ell)^m$\\
 & $\mathcal{G}(\mathbb{B}(2m),m)^{\ell}$ & $\to$ & $\mathcal{G}(\mathbb{B}(2m),2m-\ell)^m$\\
\hline
\end{tabular}
\end{center}
\renewcommand{\arraystretch}{1.0}
}

Let us assign to an ordered collection of matrices $(\boldsymbol{M}_1,\ldots,\boldsymbol{M}_t)\in\left\{\left[\begin{smallmatrix}1&0\\ 1&1\end{smallmatrix}\right],\left[\begin{smallmatrix}0&1\\-1&2\end{smallmatrix}\right]\right\}^t$ the collection
$(\boldsymbol{P}_1,\ldots,\boldsymbol{P}_t)$ such that
\begin{equation}
\boldsymbol{P}_i:=\begin{cases}\left[\begin{smallmatrix}0&1\\-1&2\end{smallmatrix}\right]
\; , &\text{if $\boldsymbol{M}_i=\left[\begin{smallmatrix}1&0\\ 1&1\end{smallmatrix}\right]$}\; ,\\
\left[\begin{smallmatrix}1&0\\ 1&1\end{smallmatrix}\right]\; ,
&\text{if $\boldsymbol{M}_i=\left[\begin{smallmatrix}0&1\\-1&2\end{smallmatrix}\right]$}\; ,
\end{cases}
\end{equation}
$1\leq i\leq t$. For any order $m$, the map
\begin{equation}
\begin{split}
\prod_{i=1}^t\boldsymbol{M}_{t-i+1}\cdot\mathcal{F}_m&\to\prod_{i=1}^t\boldsymbol{P}_{t-i+1}\cdot\mathcal{F}_m\; ,\\
\left[\begin{smallmatrix}\!h\!\\ \!k\!\end{smallmatrix}\right]&\mapsto
\left[\begin{smallmatrix}-1&1\\0&1\end{smallmatrix}\right]\cdot\left[\begin{smallmatrix}\!h\!\\ \!k\!\end{smallmatrix}\right]\; ,
\end{split}
\end{equation}
is order--reversing and bijective.

{\small

}
\end{document}